\newtheorem{theorem}{Theorem}
\newtheorem*{theorem*}{Theorem}
\newtheorem{lemma}[theorem]{Lemma}
\newtheorem{claim}[theorem]{Claim}
\newtheorem{obs}[theorem]{Observation}
\newtheorem*{claim*}{Claim}
\newtheorem*{obs*}{Observation}
\newtheorem*{lemma*}{Lemma}
\newtheorem*{sprm*}{Question}
\theoremstyle{definition}
\newtheorem*{remark*}{Remark}
\newcommand{\df}[1]{{{\color{black}\em #1}}}
\newcommand{\mb}[1]{\mathbb{#1}}
\newcommand{\es}{\mb{S}^2}
\newcommand{\ess}[1]{S_{\hspace{-.3ex}{#1}}}
\newcommand{\RR}{\mathbb{R}}
\newcommand{\rpm}{\raisebox{.25ex}{\small $\pm$}}
\begin{document} 

\title{Orthogonal colorings of the sphere}  

\author{Andreas F. Holmsen}
\address{Andreas F. Holmsen \\ Department of Mathematical Sciences \\ KAIST \\
  Daejeon \\ South Korea} 
\email{andreash@kaist.edu}

\author{Seunghun Lee}
\address{Senghun Lee \\ Department of Mathematical Sciences \\ KAIST \\
  Daejeon \\ South Korea} 
\email{prosolver@kaist.ac.kr}

\maketitle 

\begin{abstract} 
An orthogonal coloring of the two-dimensional unit sphere $\es$, is a partition of $\es$ into parts such that no part contains a pair of orthogonal  points, that is, a pair of points at spherical distance $\pi/2$ apart. It is a well-known result that an orthogonal coloring of $\es$ requires at least four parts, and orthogonal colorings with exactly four parts can easily be constructed from a regular octahedron centered at the origin. An intriguing question is whether or not every orthogonal 4-coloring of $\es$ is such an octahedral coloring. In this paper we address this question and show that if every color class has a non-empty interior, then the coloring is octahedral. Some related results are also given.
\end{abstract}

\section{Introduction} \label{intro}

Let $\es$ denote the unit sphere centered at the origin in $\RR^3$. A partition of $\es$ into $k$ non-empty parts is called a \df{$k$-coloring} and we refer to the parts as \df{color classes}. We call a $k$-coloring, $\es = \ess 1 \cup \cdots \cup \ess k$, \df{orthogonal} if $u \cdot v \neq 0$ for any pair $u$ and $v$ which belong to the same color class $\ess i$. Here $u \cdot v$ denotes the standard dot product in $\RR^3$. 

It is a well-known result that an orthogonal coloring of $\es$ requires at least {\em four} parts\footnote{Further remarks and historical notes will be given at the end of this section}, and a typical orthogonal 4-coloring of $\es$ can be obtained as follows. Define subsets
\[\def\arraystretch{1.4}
\begin{array}[h]{cccccc}
A_1 & \coloneqq & \{(x,y,z) \in \es \: : &  x> 0, & y\geq 0, &  z \geq 0 \}, \\
A_2 & \coloneqq & \{(x,y,z) \in \es \: : & x\leq 0, & y> 0, & z\geq 0\}, \\
A_3 &  \coloneqq & \{(x,y,z) \in \es \: : & x\leq 0,&  y\leq 0, & z >0\}, \\
A_4 &  \coloneqq & \{(x,y,z) \in \es \: : & x> 0,  &y < 0, & z > 0\}, 
\end{array}
\] and set $\ess i \coloneqq  A_i \cup (-A_i)$. It is easily seen that this gives us an orthogonal 4-coloring of $\es$. Moreover, this 4-coloring can be modified in (uncountably) many ways by carefully moving some boundary points from one color class to another, but we would like to consider all of these colorings to be ``equivalent'' in some sense. 

A \df{regular octahedron} is the convex hull of the set of vectors $\{\rpm u_1,\rpm u_2,\rpm u_3\}$, where $u_1, u_2, u_3$ is an orthonormal basis for $\RR^3$. Given a subset $S\subset \es$, let $\ell(S)$ denote the set of lines which  pass through the origin and a point from $S$. We call the set $S$ \df{octahedral} if all lines in $\ell(S)$ pass through the closure of two opposite faces of a  regular octahedron. We can now give a characterization of the type of 4-colorings described in the previous paragraph: A 4-coloring of $\es$ is \df{octahedral} if and only if each color class is octahedral. Notice that an octahedral coloring is not necessarily orthogonal, and that in any octahedral coloring, each color class is octahedral with respect to the same fixed regular octahedron.

Now the following natural and intriguing question presents itself.

\begin{sprm*}
Does there exist an orthogonal 4-coloring of $\:\es$ which is not octahedral ?  
\end{sprm*}

We are not able to answer this question, but we find it believable that such colorings exist. The goal of this paper is to give some alternative characterizations of orthogonal 4-colorings of $\es$ which {\em are} octahedral. These will be in terms of certain local conditions on the color classes, and may therefore provide further insights into orthogonal 4-colorings of $\es$.

Our first condition is a topological one. Let $S$ be an arbitrary subset of $\es$. We say that a non-empty open subset $D\subset \es$ is a \df{dominating set} of $S$, or that $D$ dominates $S$, if every great circle in $\es$ which intersects $D$ also intersects $S$. Notice that if $D$ dominates $S$, then $-D$ also dominates $S$. Notice also that if $D$ is an open set contained in $S$, then $D$ dominates $S$, but the converse does not hold in general. In particular, a zero-measure set $S$ can have a dominating set $D$, which is not contained in $S$ since $D$ must have positive measure. We may now state our first result.

\begin{theorem}\label{domination}
  An orthogonal 4-coloring of $\:\es$ is octahedral if and only if every color class $\ess i$ has a dominating set $D_i$.  
\end{theorem}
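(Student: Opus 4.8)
The two directions require quite different amounts of work; the \emph{only if} direction is immediate, while the \emph{if} direction carries all the difficulty.

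For the \emph{only if} direction, suppose the coloring is octahedral with respect to the regular octahedron $\conv\{\pm u_1,\pm u_2,\pm u_3\}$. The three great circles $u_k^\perp$ cut $\es$ into eight open octants, which come in four antipodal pairs, and the double cones over the corresponding pairs of opposite faces have pairwise disjoint interiors while together covering all directions. Since the four sets $\ell(\ess i)$ each lie in one such double cone and the $\ess i$ partition $\es$, a short counting argument shows that the assignment of color classes to antipodal pairs of octants is a bijection, so each open octant is entirely one color. Taking $D_i$ to be the open octant assigned to color $i$ gives an open set with $D_i\subseteq\ess i$, and any open subset of a color class dominates it (as noted in the introduction). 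Thus every $\ess i$ has a dominating set.

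For the \emph{if} direction I would first record the basic interaction between orthogonality and domination. If $q\in\ess i$ then the great circle $q^\perp$ contains no point of $\ess i$ by orthogonality, so $q^\perp$ misses the dominating set $D_i$; equivalently $q\cdot p\neq 0$ for every $p\in D_i$. Hence $\ess i$ is contained in $P_i\coloneqq\{x\in\es:\ x^\perp\cap D_i=\emptyset\}$. Since any nonempty open subset of $D_i$ is again a dominating set, I may shrink each $D_i$ to a round open disk of radius $r_i$ about a center $c_i$; then $P_i$ is exactly the union of the two antipodal spherical caps of radius $\rho_i=\tfrac\pi2-r_i<\tfrac\pi2$ about $\pm c_i$. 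As the $\ess i$ cover $\es$, so do the four cap-pairs $P_i$. This reduces the problem to the following rigid configuration: eight caps of radius strictly less than $\pi/2$, arranged in four antipodal pairs, cover the sphere, with each color class trapped in its own pair.

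The heart of the matter, and the step I expect to be the main obstacle, is to upgrade this covering into the octahedron. Covering by eight such caps is by itself far from rigid, since the radii and centers can be perturbed, so the argument must feed the within-class orthogonality back in: two points of $\ess i$ lying in, or across, the caps $P_i$ may never be orthogonal, which constrains the radii $\rho_i$ and the mutual positions of the centers $c_i$. The plan is to show that these constraints, together with the fact that the four pairs cover $\es$ with no slack, force the eight centers $\pm c_i$ to be the eight cube-diagonal directions of a single orthonormal frame $u_1,u_2,u_3$, and then to show that each color class actually fills the interior of its octant. Concretely, I would try to prove that the boundary arcs separating the color classes must lie on great circles and that the incidence and orthogonality pattern of these arcs forces the three great circles carrying them to be mutually orthogonal; this recovers the octahedron, and since each class then contains an open octant it also shows a posteriori that every color class has nonempty interior. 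Establishing that the separating boundaries are genuinely great-circle arcs, rather than merely controlling the color classes up to the cap confinement, is the delicate point on which the whole direction hinges.
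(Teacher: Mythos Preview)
Your \emph{only if} direction is fine and matches the paper's one-line dismissal of that case.

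For the \emph{if} direction, the reduction you carry out is correct as far as it goes: $\ess i\subseteq P_i$, and after shrinking $D_i$ to a disk the $P_i$ are antipodal cap-pairs covering $\es$. The trouble is with the plan that follows. By shrinking each $D_i$ to a round disk you have made an \emph{arbitrary} choice of center $c_i$ inside the original dominating set; nothing in the coloring singles out that point, so the later claim that ``these constraints \dots\ force the eight centers $\pm c_i$ to be the eight cube-diagonal directions'' cannot possibly hold as stated. Likewise the phrase ``cover $\es$ with no slack'' is unjustified: shrinking $D_i$ enlarges $P_i$, so the cap covering you produce typically has enormous overlap, and the cap boundaries, being small circles, have no chance of coinciding with the great-circle boundaries you are hoping to find. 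The cap picture is a true but loose outer bound on the color classes; it does not see where the classes actually meet.

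The paper goes in the opposite direction: instead of shrinking, it \emph{enlarges} each $D_i$ to the inclusion-maximal dominating set $R_k$ and studies the boundary $C_k$ of a connected component, which is a convex curve. The mechanism you never invoke, and which drives the entire argument, is the observation that no great circle can intersect all four color classes (since the pole of any great circle lies in some $\ess k$, so the circle misses $\ess k$). The paper classifies each boundary point of $C_k$ by the colors that accumulate there, proves a delicate lemma that all three foreign colors occur along nontrivial arcs of $C_k$, and then uses the four-color obstruction to force matching boundary arcs from different $C_k$'s onto a common great circle. Only after the four $C_k$ are shown to sit on three great circles does an orthogonality argument pin down the frame. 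Your final sentence gestures at exactly this conclusion, but the route through shrunk caps does not lead there; the boundary-of-maximal-dominating-set structure is what makes the ``separating arcs lie on great circles'' step provable.
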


Our second condition is a combinatorial one. Let $S$ be an arbitrary subset of $\es$. We say that $S$ is \df{locally octahedral} if every subset $S'\subset S$ such that $|S'| \leq 3$ is octahedral. Equivalently, this means that $(u \cdot v) (u \cdot  w) (v \cdot  w) \geq 0$ for any $u,v,w \in S$. We have the following.

\begin{theorem}\label{weak}
An orthogonal 4-coloring of $\:\es$ is octahedral if and only if each color class is locally octahedral.  
\end{theorem}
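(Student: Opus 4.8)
The forward implication is immediate: if the coloring is octahedral then each color class $\ess i$ is octahedral, so $\ell(\ess i)$ lies in the closure of two opposite faces of a fixed regular octahedron, i.e. in a single octant-pair. Every subset of $\ess i$ — in particular every subset of size at most three — then lies in that same octant-pair and is therefore octahedral, so each class is locally octahedral. The substance of the theorem is the converse, and the plan below addresses it. Throughout I fix an orthogonal $4$-coloring in which each class is locally octahedral.

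The first step is to extract a sign structure on each class. Fix $S=\ess i$. Orthogonality gives $u\cdot v\neq 0$ for all $u,v\in S$, and the reformulated local condition gives $(u\cdot v)(u\cdot w)(v\cdot w)>0$ for all distinct $u,v,w\in S$. I would record that the pairwise products are then ``balanced'': fixing a base point $v_0\in S$ and setting $X=\{v\in S:v\cdot v_0>0\}\cup\{v_0\}$ and $Y=\{v\in S:v\cdot v_0<0\}$, the triple condition applied to $v_0,u,v$ forces $u\cdot v>0$ when $u,v$ lie in the same part and $u\cdot v<0$ when they lie in opposite parts. Thus each class splits as $S=X\cup Y$ with dot products positive within parts and negative across parts. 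Since octahedrality of $S$ depends only on $\ell(S)$, I would replace $Y$ by $-Y$ and set $T_i=X\cup(-Y)$; then $\ell(T_i)=\ell(S)$ and $T_i$ has strictly positive pairwise inner products. In these terms $\ess i$ is octahedral precisely when $T_i$ lies in a common closed octant, that is, when there is an orthonormal frame $u_1,u_2,u_3$ with $v\cdot u_j\geq 0$ for all $v\in T_i$ and all $j$.

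The global input now becomes unavoidable, and I expect it to be the main obstacle. Being pairwise-acute does not by itself force $T_i$ into an octant: a spherical cap of radius strictly between $\arcsin(1/\sqrt3)$ and $\pi/4$ has all pairwise inner products positive yet is contained in no octant, since the largest cap inscribed in an octant has radius $\arcsin(1/\sqrt3)$. Hence the argument must use that $\es=\ess1\cup\ess2\cup\ess3\cup\ess4$. The cleanest route is to reduce to Theorem \ref{domination}: it suffices to produce, for each class, an open dominating set $D_i$, after which Theorem \ref{domination} delivers that the coloring is octahedral. The bipartition of the previous step, together with the partition of $\es$, should be combined to build such sets: intuitively $D_i$ should be the interior of the octant region about the center direction $c_i=\sum_{v\in T_i}v$ of the hemisphere containing $T_i$, and one must show that the covering forces any great circle meeting $D_i$ to meet $\ess i$. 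Turning the per-class acute structure plus the global partition into the positive-measure witness required by Theorem \ref{domination} is precisely the delicate point.

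As an alternative endgame, once each $T_i$ has been shown to sit in some octant-pair, the four octant-pairs have total area equal to that of $\rp$ and cover it, so they must tile $\rp$ by four right-angled spherical triangles; a rigidity argument then forces them to be the four octant-pairs of one regular octahedron, yielding an octahedral coloring. Either way, the decisive step is exploiting the global partition: the purely local, triple-wise hypothesis controls each class in isolation but only the covering can promote it to the single, shared octahedron that octahedrality demands.
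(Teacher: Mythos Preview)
Your forward direction and the per-class sign analysis are correct, and you rightly identify that reducing to Theorem~\ref{domination} is the natural target. You also correctly observe that the pairwise-acute structure on $T_i$ alone is insufficient and that the global partition must be invoked. However, the proposal has a genuine gap exactly where you flag ``the delicate point'': you never actually produce the dominating sets $D_i$. The suggested ``center direction'' $c_i=\sum_{v\in T_i}v$ is not well-defined (the sum ranges over an uncountable set), and even under a sensible reinterpretation you offer no argument that every great circle meeting a neighborhood of $c_i$ must meet $\ess i$. The alternative endgame presupposes that each $T_i$ already sits in an octant, which is the very thing to be proved.

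The paper closes this gap by a different mechanism. Rather than building dominating sets directly, it argues by dichotomy. If every class $\ess i$ has non-empty interior, that interior is itself a dominating set and Theorem~\ref{domination} applies immediately. If some class, say $\ess 4$, has empty interior, then the closures $A_1,A_2,A_3$ of the remaining classes cover $\es$. The locally octahedral inequality $(u\cdot v)(u\cdot w)(v\cdot w)\ge 0$ passes to closures, so disjointifying the $A_i$ yields a $3$-coloring of $\es$ in which every monochromatic triple satisfies this inequality. The paper then proves an independent lemma (Lemma~\ref{3uniform}): in \emph{every} $3$-coloring of $\es$ some color class contains a triple $u,v,w$ with $(u\cdot v)(u\cdot w)(v\cdot w)<0$. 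This contradiction eliminates the second branch of the dichotomy. The missing ingredient in your plan is precisely this $3$-coloring lemma; without it, or something playing the same role, there is no route from the per-class acute structure to the existence of dominating sets for all four classes.
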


The proof of Theorem \ref{domination} uses elementary methods and is given in Section \ref{P1}. Our proof of Theorem \ref{weak} is given in Section \ref{P2}  and relies on Theorem \ref{domination}. We conclude with some final remarks in Section \ref{C}.

\bigskip

{\bf {\em Remarks and historical notes.}}
The study of orthogonal colorings of $\es$ implicitly dates back to the 1960's and the celebrated theorem  of Kochen and Specker \cite{kochen}. They presented a finite subconfiguration $X\subset \es$ which is {\em KS-uncolorable}, which in particular implies that  there are no orthogonal 3-colorings of $\es$. Kochen and Specker's original configuration consisted of 117 vectors, and more recently Peres \cite{peres} found a configuration of 33 vectors. The following configuration of 13 vectors was communicated to us by Evan DeCorte \cite{edc}, and the reader is invited to check that it results in a graph with chromatic number 4. 
\[
\begin{array}{ccccc}
  (1,0,0) \; ,  & & (0,1,0) \; ,  &  & (0,0,1) \; ,   \\ 
  (0,1,1) \; , & & (1,0,1) \; , &  & (1,1,0)  \; , \\
   (0,1,-1) \; , &  & (1,0,-1) \; ,   &  & (1,-1,0) \; ,  \\
   (-1,1,1) \; ,  &  & (1,-1,1) \; , &  & (1,1,-1)  \; , \\
& & (1,1,1) \; . & &
\end{array}
\]

Around the same time as the Kochen--Specker theorem was discovered, Erd{\H o}s made several conjectures regarding colorings of $\es$ (and other spaces as well). An elementary proof that an orthogonal coloring of $\es$ requires at least 4 colors was given by Simmons \cite{simmons} where an explicit octahedral coloring is also described. 

From another point of view, Hales and Straus \cite{hales} studied colorings of the projective plane $\mathbb{F}P^2$ in which any line contains at most two distinct colors. They showed that such colorings are in one-to-one correspondence with the non-Archimedean valuations on $\mathbb{F}$. By setting $\mathbb{F} = \mathbb{R}$, Godsil and Zaks \cite{godsil} used this result to give a different proof of the fact that $\es$ does not admit an orthogonal 3-coloring. In addition, they showed that the 2-adic valuation on $\mathbb{Q}$ produces an orthogonal 3-coloring of the {\em rational unit sphere}, $\es \cap {\mathbb{Q}^3}$, in which each color class is dense in $\es$.

Colorings of the $d$-dimensional unit sphere $\mathbb{S}^d$ have also been considered for $d>2$. It was shown by Lov{\'a}sz \cite{lov}, using topological techniques, that for any $-1 < \alpha < 1$ and any $d$-coloring of $\mathbb{S}^d$ there is a color class that contains a pair of vectors $u$ and $v$ such that $u\cdot v = \alpha$. This result was significantly improved by Raigorodskii \cite{raig} who showed, using linear algebra techniques, that the same conclusion holds for $k$-colorings of $\mathbb{S}^d$ where $k=k(d)$ grows exponentially.
  
The problems on orthogonal colorings of $\es$ (and $\mb{S}^d$) belong to a larger class of questions concerning colorings of graphs arising from metric spaces. Among these,  the arguably most famous one is the Hadwiger--Nelson problem which asks for the minimum number of colors needed to color the points of $\RR^2$ such that no two points at distance 1 apart have the same color. It is known that this number is at least 4 and at most 7, but the exact value may depend on the choice of axioms for set theory, as was demonstrated by Shelah and Soifer \cite{soifer1, soifer2}. Several variations of this problem have been considered, and by imposing additional conditions on the color classes the lower bound can be improved. For instance, Falconer \cite{falconer} showed that if every color class is Lebesgue measurable, then at least 5 colors are needed, while Woodall \cite{woodall} showed that if the color classes are bounded by Jordan curves, then at least 6 colors are needed. For more information about recent results, developments, and further directions of research we refer the reader to Raigorodskii's survey articles \cite{raiS1, raiS2} as well as Soifer's colorful book \cite{soifs}.


\section{Proof of Theorem \ref{domination}}\label{P1}

Before we get to the proof let us recall the notion of spherical convexity. Any pair of non-antipodal points $u$ and $v$ in $\es$ are connected by a unique \df{geodesic arc} contained in a unique \df{great circle} passing through $u$ and $v$. This allows us to define convexity for subsets of $\es$ which are contained in an open hemisphere $H\subset \es$. In fact, central projection into an affine plane parallel to the boundary of $H$ provides a homeomorphism between $H$ and $\RR^2$ which preserves convexity. Thus a set $S\subset \es$ is convex if and only if it does not contain antipodal points and it contains, with each pair of its points, the small arc of the great circle determined by them. {(The reader may consult for instance \cite[Section 9]{dgk} for more about spherical convexity, where our type of convex sets are called {\em strongly convex}.)} 

\bigskip
We now start the proof of Theorem \ref{domination}. Let $\es = \ess 1 \cup \ess 2 \cup \ess 3 \cup \ess 4$ be an orthogonal coloring where each color class has a dominating set. Our goal is to show that the coloring is octahedral. (The other direction is trivial.) Here is an obvious observation which will be used several times in our argument.

\begin{obs}\label{4c}
There is no great circle which intersects every color class.
\end{obs}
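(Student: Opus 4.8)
The plan is to exploit the duality between a great circle and its two poles. Every great circle $C\subset\es$ arises as the equator $C = \{x \in \es : x \cdot n = 0\}$ for a unique pair of antipodal poles $n, -n \in \es$, namely the two unit normals to the plane spanning $C$. Since the given orthogonal coloring partitions \emph{all} of $\es$, the pole $n$ lies in some color class, say $n \in \ess i$. This is the only input I would need.

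The core step is then immediate: by definition of the equator, every point $x \in C$ satisfies $x \cdot n = 0$, so $x$ and $n$ are a pair of orthogonal points. If some $x \in C$ had $x \in \ess i$, then $x$ and $n$ would be two orthogonal points in the same color class, contradicting the orthogonality of the coloring. Hence $C \cap \ess i = \varnothing$, so $C$ misses at least the color class $\ess i$ (and, by the same argument applied to $-n$, possibly a second one). In particular $C$ cannot intersect all four color classes, which is exactly the claim.

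Since this is flagged as an observation rather than a theorem, there is no genuine obstacle to overcome; the entire content is the recognition that the pole of a great circle has a color that the circle itself is forbidden from using. The only point requiring a word of care is that the argument relies on $n$ actually being colored, which is guaranteed precisely because we are dealing with a coloring of the whole sphere $\es$ and not merely of $C$. I would therefore keep the proof to a single short paragraph, stating the pole correspondence, invoking orthogonality between the pole and its equator, and concluding that each great circle avoids the color of its pole.
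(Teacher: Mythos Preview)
Your argument is correct and is exactly the intended one: the paper does not even spell out a proof, calling the observation ``obvious,'' and the pole-of-the-great-circle reasoning you give is the standard one-line justification. There is nothing to add or change.
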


\subsection{The connected components of dominating sets}
Let $R_k$ denote the union of all sets $D_k$ which dominate $\ess k$, that is, the inclusion-maximal set which dominates $\ess k$. By our hypothesis and the definition of dominating sets, $R_k$ is a non-empty open set. The first basic observation is the following.
\begin{claim}
Every connected component of $R_k$ is an open convex set. 
\end{claim}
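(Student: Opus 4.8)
The plan is to give an explicit description of $R_k$ as the complement of a union of great circles, and then to read off convexity of its components from the fact that each component lies on a single side of every one of these great circles.

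First I would introduce $B_k$, the union of all great circles that are disjoint from $\ess k$, and verify the identity $R_k = \es \setminus \overline{B_k}$. One inclusion is immediate: since $R_k$ dominates $\ess k$, no great circle disjoint from $\ess k$ can meet $R_k$, so $R_k \cap B_k = \emptyset$, and as $R_k$ is open this upgrades to $R_k \cap \overline{B_k} = \emptyset$. For the reverse inclusion, if $x \notin \overline{B_k}$ then $x$ has an open neighbourhood $U$ disjoint from $B_k$; any great circle meeting $U$ is then not contained in $B_k$, hence meets $\ess k$, so $U$ dominates $\ess k$ and $x \in R_k$. This simultaneously gives the identity and reconfirms that $R_k$ is open. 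Note also that $B_k$ is nonempty: for every $u \in \ess k$ the great circle $\{x \in \es : x\cdot u = 0\}$ is disjoint from $\ess k$ by orthogonality, so it lies in $B_k$.

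The crucial step, and the one I expect to be the main obstacle, is to show that $\overline{B_k}$ is again a union of great circles, i.e. that every point of $\overline{B_k}$ lies on a great circle entirely contained in $\overline{B_k}$. The difficulty is that the set of great circles disjoint from $\ess k$ need not be closed, so $\overline{B_k}$ can be strictly larger than $B_k$. I would resolve this by a compactness argument: a point $c \in \overline{B_k}$ is a limit of points $c_n$ lying on great circles $\gamma_n \subseteq B_k$; passing to a subsequence along which the unit normal vectors of the $\gamma_n$ converge, the $\gamma_n$ converge in the Hausdorff metric to a great circle $\gamma$ through $c$, and since every point of $\gamma$ is a limit of points of $B_k$ we get $\gamma \subseteq \overline{B_k}$.

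With this in hand the conclusion is quick. Let $C$ be a connected component of $R_k = \es \setminus \overline{B_k}$. For each great circle $\gamma \subseteq \overline{B_k}$, the connected set $C$ is disjoint from $\gamma$ and therefore lies in one of the two open hemispheres bounded by $\gamma$; call it $H_\gamma$. Setting $K = \bigcap_\gamma H_\gamma$, the intersection being over all great circles contained in $\overline{B_k}$, we obtain $C \subseteq K$, while on the other hand $K \subseteq \es \setminus \bigcup_\gamma \gamma = \es \setminus \overline{B_k} = R_k$. Since the family is nonempty, $K$ is an intersection of open hemispheres lying inside a single open hemisphere, hence contains no antipodal pair and is convex in the spherical sense; being convex it is connected, and as a connected subset of $R_k$ containing the component $C$ it must coincide with $C$ by maximality. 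Thus $C = K$ is open and convex, as claimed.
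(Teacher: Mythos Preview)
Your argument is correct and takes a genuinely different route from the paper's. The paper proceeds directly: for two points $x,y$ in the same component it observes that any great circle separating them must cross a path in $R_k$ and hence meet $\ess k$; it then rules out antipodal pairs (using orthogonality to derive a contradiction if every great circle met $\ess k$) and argues that every point on the geodesic arc from $x$ to $y$ again lies in a dominating set. Your approach is more structural: you identify $R_k$ with $\es\setminus\overline{B_k}$, use compactness of the space of great circles to show $\overline{B_k}$ is itself a union of great circles, and then realize each component as an intersection of open hemispheres. This buys you a sharper description of the components (they are literally intersections of open hemispheres indexed by the great circles in $\overline{B_k}$), and the antipodal-point issue disappears automatically once you know the intersection is over a nonempty family; orthogonality enters only to guarantee $B_k\neq\emptyset$. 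The paper's argument, by contrast, stays closer to the definition of domination and avoids the compactness/limit step. One small remark: the openness of $C=K$ should be read as coming from $C$ being a component of the open set $R_k$ in the locally connected space $\es$, not from $K$ being an intersection of open sets; you have already noted that $R_k$ is open, so this is fine.
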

\begin{proof} Notice that if $x$ and $y$ are distinct points which belong to the same connected component of $R_k$, then every great circle which separates $x$ from $y$ (that is, $x$ and $y$ are contained in opposite open hemispheres) must contain a point in $\ess k$.  In particular, $x$ and $y$ cannot be a pair of antipodal points: If this were the case, then {\em every} great circle must contain a point in $\ess k$, and it is easily seen that such a coloring can not be orthogonal. 

We may therefore assume there is a unique geodesic arc $\alpha$ which connects the points $x$ and $y$. For a point $p \in \alpha$, every great circle which passes through $p$ must contain a point in $\ess k$, and it is easily seen that this also holds for any point in a sufficiently small $\varepsilon$-neighborhood of $p$. Thus $p$ is contained in a dominating set of $\ess k$, and consequently the entire segment $\alpha$ belongs to a dominating set of $\ess k$ which obviously belongs to the same connected component of $R_k$ as $x$ and $y$. \end{proof}

\subsection{The boundaries of dominating sets} For every $1 \leq k \leq 4$, consider a fixed connected component of $R_k$ and let $C_k$ denote its boundary. This gives us four convex curves $C_1$, $C_2$, $C_3$, and $C_4$. By definition, every great circle which intersects $C_k$ transversally contains a point of $\ess k$. The connected component of $R_k$ bounded by $C_k$ will be referred to as {\em the bounded region} of $C_k$.

We say that a point $p\in C_k$ has {\sc Type $j$} if every $\varepsilon$-neighborhood of $p$ contains a point $q\neq p$ such that $q \in\ess j$. Note that in general a point in $C_k$ can have more than one {\sc Type}, and since $C_k$ is the boundary of a connected component of $R_k$ we have the following.
\begin{obs} \label{j not k}
For every point $p\in C_k$, there exists a $j\neq k$ such that $p$ has {\sc Type $j$}. 
\end{obs}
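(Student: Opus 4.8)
The plan is to argue by contradiction, exploiting the fact that a boundary point of a connected component of the open set $R_k$ cannot itself lie in $R_k$. So suppose some $p\in C_k$ fails to have {\sc Type $j$} for every $j\neq k$. By the definition of {\sc Type}, for each of the three indices $j\neq k$ there is a radius $\eps_j>0$ such that the $\eps_j$-neighborhood of $p$ contains no point of $\ess j$ other than possibly $p$ itself. Taking $\eps$ to be the minimum of these finitely many radii and letting $N$ be the open $\eps$-neighborhood of $p$, the punctured neighborhood $N\setminus\{p\}$ meets none of the classes $\ess j$ with $j\neq k$. Since the four color classes partition $\es$, this forces $N\setminus\{p\}\subseteq \ess k$.

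The crux is then to upgrade this to the statement that $N$ itself dominates $\ess k$, i.e.\ that deleting the single point $p$ does not destroy the domination property. I would verify this directly: any great circle $G$ that meets the open set $N$ intersects it in a non-empty relatively open arc, which therefore contains points other than $p$; each such point lies in $N\setminus\{p\}\subseteq\ess k$, so $G$ meets $\ess k$. Hence $N$ dominates $\ess k$, and by the maximality of $R_k$ we conclude $N\subseteq R_k$, and in particular $p\in R_k$.

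To close the contradiction I would record the topological fact that $p\notin R_k$. Indeed, $p$ lies on the boundary $C_k$ of a fixed connected component $U$ of the open set $R_k$, so $p\notin U$; and if $p$ belonged to $R_k$, then, $R_k$ being open, some ball about $p$ would sit inside $R_k$, and since this ball meets $U$ (as $p\in\partial U$) and is connected it would lie entirely in $U$, forcing the contradiction $p\in U$. Thus $p\notin R_k$, which contradicts $p\in R_k$ obtained in the previous step, and the observation follows.

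I expect the only genuine subtlety to be the ``puncture-robustness'' of domination in the middle step: one must be careful that the excluded point $p$ is harmless precisely because a great circle is one-dimensional and so cannot meet the open neighborhood $N$ in the single point $p$. Everything else is bookkeeping with the definitions of $R_k$, {\sc Type}, and boundaries of open sets.
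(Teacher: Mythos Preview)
Your argument is correct. The paper does not actually supply a proof of this observation; it merely asserts it as an immediate consequence of the sentence ``since $C_k$ is the boundary of a connected component of $R_k$.'' Your write-up is precisely the natural unpacking of that remark: a point with no {\sc Type $j$}, $j\neq k$, would have a punctured neighborhood inside $\ess k$, and hence (by your one-dimensionality/puncture-robustness step) the full neighborhood would dominate $\ess k$, placing $p$ in the open set $R_k$ and contradicting $p\in\partial U$ for a component $U$ of $R_k$. So your approach is essentially what the authors intend, just made explicit.
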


For every $j \neq k$ let $T_k(j)\subset C_k$ denote the set of points in $C_k$ of {\sc Type $j$}. By Observation \ref{j not k} we have $C_k = \bigcup_{j\neq k}T_k(j)$, and it is easily seen that each $T_k(j)$ is a closed subset of $C_k$. Therefore the following holds.

\begin{obs}\label{open arcs}
For any open arc $A\subset C_k$, there exists a $j\neq k$ such that the intersection $A\cap T_k(j)$ contains an open arc.
\end{obs}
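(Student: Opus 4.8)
The plan is to prove this by an elementary point-set topology argument, essentially an instance of the Baire category theorem in its finite form. The two inputs I need are already established in the preceding paragraph: by Observation \ref{j not k} the sets $T_k(j)$ with $j \neq k$ cover $C_k$, and each $T_k(j)$ is closed in $C_k$. Crucially, there are only three indices $j \neq k$, so this is a \emph{finite} closed cover.

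First I would fix an open arc $A \subset C_k$ and restrict the cover to it. Since each $T_k(j)$ is closed in $C_k$, each intersection $A \cap T_k(j)$ is relatively closed in $A$, and by Observation \ref{j not k} these three relatively closed sets cover $A$; that is, $A = \bigcup_{j \neq k} (A \cap T_k(j))$.

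Next I would use the fact that an open arc, being homeomorphic to an open interval of $\RR$, is a Baire space; equivalently, a finite union of nowhere dense subsets of $A$ is again nowhere dense in $A$. Since $A$ is obviously not nowhere dense in itself, at least one of the three relatively closed sets $A \cap T_k(j)$ must fail to be nowhere dense, and hence — being relatively closed and thus equal to its own relative closure — must have non-empty relative interior. A non-empty relatively open subset of an arc contains an open sub-arc, so for this value of $j$ the set $A \cap T_k(j)$ contains an open arc, which is exactly the claim.

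I do not expect a genuine obstacle here: the only point requiring mild care is to confirm that the cover of $A$ consists of only finitely many (here three) relatively closed pieces, so that the finite version of the argument applies directly, with no need for any countable-cover or completeness machinery. Should one wish to bypass Baire category entirely, the same conclusion follows from the elementary lemma that a finite union of closed sets each having empty interior again has empty interior, applied to the relatively closed cover $\{A \cap T_k(j) : j \neq k\}$ of $A$.
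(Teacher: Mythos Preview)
Your proposal is correct and is precisely the argument the paper has in mind: the paper simply records that $C_k = \bigcup_{j\neq k} T_k(j)$ with each $T_k(j)$ closed in $C_k$ and then states the observation without further proof, leaving to the reader exactly the finite-cover / empty-interior reasoning you spell out. There is nothing to add.
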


We are now ready for the key step of the proof.

\begin{lemma} \label{crucial 1}
For every $j\neq k$, the set $T_k(j)\subset C_k$ has non-empty interior.
\end{lemma}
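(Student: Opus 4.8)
The plan is to argue by contradiction and reduce the whole statement to excluding one degenerate configuration: that $C_k$ is covered by only two of the three type-sets. Suppose some $T_k(j_0)$ with $j_0\neq k$ had empty interior. Since $T_k(j_0)$ is closed, it would then be nowhere dense, so its complement in $C_k$ is open and dense. Writing $\{j_1,j_2\}$ for the two remaining colors, every point of $C_k$ not lying in $T_k(j_1)\cup T_k(j_2)$ has, by Observation \ref{j not k}, some type different from $k$, which can only be $j_0$; hence the open dense complement of $T_k(j_0)$ is contained in $T_k(j_1)\cup T_k(j_2)$. As the latter set is closed, it would have to be all of $C_k$. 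So it suffices to show that $C_k$ cannot be written as $T_k(j_1)\cup T_k(j_2)$ for two colors $j_1,j_2\neq k$.

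To contradict such a two-color cover I would produce a single great circle meeting all four color classes, which is forbidden by Observation \ref{4c}. Three of the four classes are easy to capture near $C_k$. Since $T_k(j_1)$ and $T_k(j_2)$ are closed and cover the connected curve $C_k$, they cannot be disjoint, and by the Baire property at least one, say $T_k(j_1)$, contains an open arc; if $T_k(j_2)$ contained no arc we would be in the even more degenerate single-type situation, which I expect to rule out by the same mechanism, so assume both contain open arcs. Choose a great circle $\ell$ through two points $x_1\in\operatorname{int}T_k(j_1)$ and $x_2\in\operatorname{int}T_k(j_2)$; the short arc between $x_1$ and $x_2$ lies in the bounded region of $C_k$ by convexity, so $\ell$ enters this region, which dominates $\ess k$, and therefore $\ell$ meets $\ess k$, crossing $C_k$ transversally at $x_1$ and $x_2$. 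Because $x_1$ has {\sc Type $j_1$} and $x_2$ has {\sc Type $j_2$}, there are points of $\ess{j_1}$ arbitrarily close to $x_1$ and points of $\ess{j_2}$ arbitrarily close to $x_2$; replacing $\ell$ by the great circle through one such pair keeps both crossings transversal (transversality is stable under small perturbations), so the new circle still meets $\ess k$ while now also meeting $\ess{j_1}$ and $\ess{j_2}$. This realizes three of the four classes on one great circle.

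The main obstacle is to force the fourth class $\ess{j_0}$ onto such a circle, since $\ess{j_0}$ need not appear near $C_k$ at all. For this I would exploit that $C_k$ lies on the boundary of $R_k$ and is disjoint from the open set $R_k$: no neighborhood of a point of $C_k$ dominates $\ess k$, so through every neighborhood of every point of $C_k$ there pass great circles avoiding $\ess k$ entirely. Analyzing such a ``bad'' great circle $b$ that nearly supports $C_k$, together with the orthogonality of the coloring—no two points of a common class are at spherical distance $\pi/2$—I would argue that $b$ is compelled to carry a point of the missing color $j_0$, because the colors $j_1,j_2$ available near $C_k$ cannot by themselves satisfy the orthogonality constraints along $b$ once its position relative to the bounded region is accounted for. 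Feeding such a point of $\ess{j_0}$ into the construction above, while preserving transversality of the two crossings, would give a great circle meeting all four classes, contradicting Observation \ref{4c}.

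I expect this production of the fourth color to be the delicate step: unlike the crossing color $\ess k$ and the two proximity colors $\ess{j_1},\ess{j_2}$, the color $\ess{j_0}$ must be summoned through the interplay of orthogonality with the global geometry of the bad circles rather than from the local picture at $C_k$. The structural facts I would lean on are the convexity of each component of $R_k$, which furnishes well-defined supporting great circles and the stability of transversal crossings, and the antipodal symmetry $R_k=-R_k$ noted earlier, which pairs each configuration with its antipode and should help close the orthogonality bookkeeping. Once the two-color cover is excluded for every choice of $\{j_1,j_2\}$, no $T_k(j_0)$ can have empty interior, which is exactly the assertion of Lemma \ref{crucial 1}.
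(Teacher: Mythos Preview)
Your reduction to excluding a two-color cover $C_k=T_k(j_1)\cup T_k(j_2)$ is fine, and the way you place $\ess k$, $\ess{j_1}$, $\ess{j_2}$ on a single great circle by crossing $C_k$ transversally near interior points of the two type-arcs and perturbing is exactly what the paper does. The gap is the fourth color. Your plan to summon $\ess{j_0}$ from ``bad'' supporting circles of $C_k$ via orthogonality is not carried out, and in fact there is no reason such a circle should carry $\ess{j_0}$: nothing about the local picture at $C_k$ or the orthogonality constraint on a single great circle forces the absent color to appear. The idea you are missing is that, by the standing hypothesis of Theorem~\ref{domination}, the missing class $\ess{j_0}$ \emph{also} has a dominating set, hence its own convex boundary curve $C_{j_0}$; any great circle that crosses $C_{j_0}$ transversally automatically meets $\ess{j_0}$. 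The paper's argument in the two-arc case is simply to arrange the crossing of $C_k$ so that the same circle also crosses $C_{j_0}$ transversally (using either a common interior point of the two bounded regions, or, when the regions are disjoint, tangent lines from a point inside $C_{j_0}$).

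You also wave away the single-type case (``if $T_k(j_2)$ contained no arc we would be in the even more degenerate situation, which I expect to rule out by the same mechanism''). In the paper this case is genuinely different and is where the orthogonality hypothesis is actually used: when $C_k=T_k(j_1)$, one looks at points orthogonal to both the bounded regions of $C_k$ and $C_{j_1}$, shows that a neighborhood there is forced into $\ess{j_2}\cup\ess{j_0}$, builds a dominating set for one of these classes, and then derives a pair of orthogonal points in a single class. So both the mechanism and the case split need the curves $C_{j}$ for $j\neq k$; without invoking them your outline cannot close.
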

\begin{proof}
 For the sake of argument let us set $k=4$, and suppose, contrary to the claim, that the interior of $T_4(3)$ is empty. We split the argument into two cases.

{\em First case.} Suppose both $T_4(1)$ and $T_4(2)$ both have non-empty interiors. If there exists a point $p$ belonging to the bounded regions of $C_3$ and $C_4$, then we reach a contradiction as follows: Since $T_4(1)$ and $T_4(2)$ are both non-empty closed sets which cover $C_4$, there exists a point $q\in T_4(1)\cap T_4(2)$. The great circle which passes through $p$ and $q$, intersects $C_4$ in another point $r\neq q$ which we may assume has {\sc Type} 2. Thus there are points $q' \in \ess 1$ and $r' \in \ess 2$ sufficiently close to $q$ and $r$, respectively, such that the great circle which passes through $q'$ and $r'$ will intersect both $C_3$ and $C_4$ transversally. However, this great circle would intersect every color class, which contradicts Observation \ref{4c}. 

We now assume the bounded regions of $C_3$ and $C_4$ are disjoint. Choose a point $p$ in the bounded region of $C_3$ such that the two great circles, $\gamma$ and $\gamma'$, which pass through $p$ and are tangent to $C_4$, each intersect $C_4$ in single points, say, $x$ and $x'$. Notice that the set of exceptional points where this property fails is contained in an  at most countable union of great circles. Thus the set of exceptional points has measure zero, so the existence of the point $p$ is guaranteed. The set $C_4\setminus \{x,x'\}$ consists of two disjoint open arcs, and any great circle through $p$ which intersects one of these arcs also intersects the other, and thus intersects $C_4$ transversally. If any of these two arcs contain a point of {\sc Type} 1 and {\sc Type} 2, then we may proceed as in the previous paragraph. The only other alternative is that one of the arcs consists only of points of {\sc Type} 1, while the other arc consists only of points of {\sc Type} 2, and again we may apply the argument from the previous paragraph. We can therefore conclude that it is impossible that both $T_4(1)$ {\em and} $T_4(2)$ have non-empty interiors (under the assumption that $T_4(3)$ has empty interior).
 
{\em Second case.} Suppose $T_4(2)$ has empty interior, which implies that $C_4 = T_4(1)$.  Choose a point $p$ which is orthogonal to both a point in the bounded region of $C_1$ {\em and} a point in the bounded region of $C_4$. Let $D$ be a sufficiently small open neighborhood of $p$ consisting of points which are simultaneously orthogonal to a point in the bounded region of $C_1$ {\em and} a point in the bounded region of $C_4$. Note that $D\subset \ess 2 \cup \ess 3$ since the orthogonal complement of a point in $D$ is a great circle which intersects $C_1$ and $C_4$ transversally. Without loss of generality we may assume that the point $p \in D$ belongs to  $\ess 2$, and we now show that this forces $D$ to contain a subset which dominates $\ess 2$. 

To see why this is true, consider a great circle $\gamma$, which passes through $p$ and intersect $C_4$ transversally. By our assumption, $\gamma$ intersects $C_4$ in a point $q$ of {\sc Type} 1, and for any point $q'$ sufficiently close to $q$, the great circle passing through $p$ and $q'$ will intersect $C_4$ transversally. Therefore there exists a great circle $\gamma'$ passing through $p$ which contains points in $\ess 1$, $\ess 2$, and $\ess 4$. By Observation \ref{4c}, it follows that the open geodesic arc $\gamma'\cap D$ is contained in $\ess 2$. Applying this same argument to another great circle which passes through $p$ and intersects $C_4$ transversally, results in another open geodesic arc $\gamma''\cap D \subset \ess 2$. These two geodesic arcs cross at the point $p$, and the convex hull their union dominates $\ess 2$.

Without loss of generality we can assume that $C_2$ is the boundary of the connected component of $R_2$ which contains the point $p$. If $T_2(3)$ has non-empty interior, then we can find a point $q\in C_2$ of {\sc Type} 3, and a point $r\in C_4$ of {\sc Type 1}, such that the great circle passing through $q$ and $r$ intersects both $C_2$ and $C_4$ transversally. As before, if $q'$ and $r'$ are sufficiently close to $q$ and $r$, respectively, then the great circle through $q'$ and $r'$ will intersect both $C_2$ and $C_4$ transversally, and we could find a great circle which intersects every color class, contradicting Observation \ref{4c}. We may therefore assume that the interior of $T_2(3)$ is empty.

By applying the first case of the argument to the curve $C_2$, it follows that either $T_2(1)$ or $T_2(4)$  has empty interior. In other words, we may assume that either $C_2 = T_2(4)$ or $C_2 = T_2(1)$. Suppose $C_2 = T_2(4)$. Let $q$ denote a point in the bounded region of $C_4$ which is orthogonal to the point $p$ (which is in the bounded region of $C_2$). The orthogonal complement of $q$ is a great circle $\gamma$ which passes through $p$, and since $p$ is in the bounded region of $C_2$, it follows that $\gamma$ intersects $C_2$ transversally. Let $r$ be one of the intersection points between $\gamma$ and $C_2$. If a point $r'$ is sufficiently close to $r$, then its orthogonal complement is a great circle which intersects $C_4$ transversally, and therefore contains a point in $\ess 4$. However, since $r$ has {\sc Type} 4, this would result in a pair of orthogonal points in $\ess 4$.

The situation is symmetric if we suppose that $C_2 = T_2(1)$. In this case, we choose a point $q$ in the inner region of $C_1$ which is orthogonal to the point $p$. We may now proceed as in the previous paragraph, resulting in a pair of orthogonal points in $\ess 1$.\end{proof}

\subsection{The shape of dominating sets}
Let $V_k(j)$ denote the interior of $T_k(j) \subset C_k$. By Lemma \ref{crucial 1} we know that $V_k(j)\neq \emptyset$ for all $j\neq k$, and the following claim implies that all the sets $V_k(j)$ are contained in the union of three great circles. 
\begin{claim} \label{all permutations}
For any permutation $\pi = (i,j,k,l)$ of $\{1,2,3,4\}$, there exists a great circle which contains the sets $V_i(j)$ and $V_k(l)$.
\end{claim}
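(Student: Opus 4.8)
The plan is to fix a permutation $(i,j,k,l)$ of $\{1,2,3,4\}$ and to study, for points $u\in V_i(j)$ and $w\in V_k(l)$, the great circle $g$ passing through $u$ and $w$. The central dichotomy I would establish first is that $g$ cannot cross \emph{both} $C_i$ transversally at $u$ \emph{and} $C_k$ transversally at $w$. Indeed, suppose it did. Since $u$ lies in $V_i(j)$, the interior of the set of Type $j$ points, there are points of $\ess j$ arbitrarily close to $u$, and likewise points of $\ess l$ arbitrarily close to $w$. Choosing $u'\in\ess j$ near $u$ and $w'\in\ess l$ near $w$, and using that transversal crossing is a stable (open) condition, the great circle through $u'$ and $w'$ would still cross $C_i$ near $u$ and $C_k$ near $w$, hence enter the bounded regions of $C_i$ and $C_k$ and meet both $\ess i$ and $\ess k$. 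This circle would then meet all four color classes, contradicting Observation \ref{4c}. Consequently $g$ must be a supporting great circle of $C_i$ at $u$ or a supporting great circle of $C_k$ at $w$.

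Next I would fix a point $u\in V_i(j)$ at which the convex curve $C_i$ is differentiable (such points are dense, since a convex curve fails to be differentiable on at most a countable set), and let $\tau_u$ denote the unique supporting great circle of $C_i$ there. For each $w\in V_k(l)$ the dichotomy forces either $g(u,w)=\tau_u$, that is $w\in\tau_u$, or else $u$ to lie on a supporting great circle of $C_k$ at $w$. The latter occurs for at most two points $w$, namely the (at most) two points of $C_k$ at which the tangent great circles from $u$ touch. Hence all but at most two points of $V_k(l)$ lie on $\tau_u$. Since $V_k(l)$ is a non-empty open subset of $C_k$ by Lemma \ref{crucial 1}, it contains infinitely many points, and a great circle meeting a convex curve in infinitely many points must contain a subarc of it; removing the at most two exceptional points and invoking closedness of $\tau_u$, I conclude $V_k(l)\subseteq\tau_u$.

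Finally I would let $u$ vary. Choosing two distinct differentiable points $u_1,u_2\in V_i(j)$, the previous step gives $V_k(l)\subseteq\tau_{u_1}\cap\tau_{u_2}$. Two distinct great circles meet in only two antipodal points, so the infinitude of $V_k(l)$ forces $\tau_{u_1}=\tau_{u_2}$. Thus $\tau_u$ is a single great circle $g_0$ independent of the differentiable point $u\in V_i(j)$; since $u\in\tau_u=g_0$ for all such $u$ and $g_0$ is closed, density yields $V_i(j)\subseteq g_0$. Combined with $V_k(l)\subseteq g_0$, this exhibits the great circle $g_0$ containing both $V_i(j)$ and $V_k(l)$, proving the claim.

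The step I expect to be the main obstacle is the careful handling of tangency and transversality on the sphere: justifying that transversal crossing is stable under perturbation, that the supporting great circle is unique at a differentiable point, and that from an exterior point there are at most two tangent points to a spherical convex curve. One must also dispose of the minor degeneracies in which $C_k$ contains a geodesic edge lying on a great circle through $u$ (a situation which in fact only helps, since it already places a subarc of $V_k(l)$ on a great circle through $u$), as well as the cases where $u$ and $w$ are antipodal or where $C_i$ and $C_k$ meet. These can be avoided by working with generic choices, as the corresponding exceptional sets have measure zero.
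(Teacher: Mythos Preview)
Your argument is correct, and its core contradiction---a perturbed great circle that crosses $C_i$ and $C_k$ transversally while passing through points of $\ess j$ and $\ess l$, hence meeting all four color classes---is exactly the one the paper uses. Where you diverge is in the packaging: you fix a differentiable point $u\in V_i(j)$, invoke the uniqueness of the supporting great circle $\tau_u$, and then argue point-by-point that all but at most two $w\in V_k(l)$ must lie on $\tau_u$, before varying $u$ to pin down $V_i(j)$ as well.

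The paper bypasses this tangent-line analysis entirely by working with open arcs rather than points. It observes that it suffices to show that an arbitrary pair of open connected arcs $A\subset V_i(j)$ and $B\subset V_k(l)$ lies on a common great circle (an arc determines its great circle, so this immediately forces all of $V_i(j)\cup V_k(l)$ onto one circle). If $A\cup B$ were not contained in a great circle, one could find a great circle $\gamma$ crossing both $A$ and $B$ transversally; a small perturbation of $\gamma$ then passes through nearby points of $\ess j$ and $\ess l$ while still crossing $C_i$ and $C_k$ transversally, yielding the same contradiction with Observation~\ref{4c}. This avoids the differentiability, tangent-counting, and degeneracy bookkeeping you flag at the end: the only geometric fact needed is that two arcs not lying on a common great circle admit a common transversal great circle, and transversality with an arc is manifestly stable. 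Your route works, but the arc formulation is both shorter and cleaner.
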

\begin{proof} Note that it suffices to prove the claim for a single pair of open connected arcs $A\subset V_i(j)$ and $B \subset V_k(l)$. If $A\cup B$ is not contained in a great circle, then there must exist a great circle $\gamma$, which intersects $A$ and $B$ transversally. This implies that close to $\gamma$ we can find a great circle $\gamma'$ which contains points in $\ess j$ and $\ess l$ and intersect the arcs $A$ and $B$, and therefore also the curves $C_i$ and $C_k$ transversally. In this case $\gamma'$ intersects every color class, contradicting Observation \ref{4c}.
\end{proof}

Claim \ref{all permutations} imposes severe restrictions on the shape of the curves $C_k$, because Observation \ref{open arcs} implies that every open interval of $C_k$ is contained in one of the great circles described in Claim \ref{all permutations}. Moreover, Claim \ref{all permutations} implies that the sets $V_i(j)$ and $V_j(i)$ belong to the same great circle, and it follows that there are three great circles $\gamma_1$, $\gamma_2$, and $\gamma_3$ whose union contain every curve $C_k$:
\[\def\arraystretch{1.4}
\begin{array}[h]{ccc}
V_1(2) \cup V_3(4) \cup V_2(1)  \cup V_4(3)  & \subset &  \gamma_1, \\ 
V_1(3)  \cup V_2(4)  \cup V_3(1) \cup V_4(2)  & \subset &  \gamma_2, \\
V_1(4) \cup V_2(3) \cup  V_4(1) \cup V_3(2)  & \subset &  \gamma_3. 
\end{array}
\]
We are now ready to complete the proof of Theorem \ref{domination}. Let $n_i\in \es$ be a normal vector to the great circle $\gamma_i$. Our goal is to show that the vectors $n_1$, $n_2$, $n_3$ form an orthonormal basis for $\RR^3$. Let us first show that $n_1$, $n_2$, $n_3$ are linearly independent. The great circles $\gamma_1$, $\gamma_2$, $\gamma_3$ divide $\es$ into regions, and each curve $C_k$ is the boundary of such a region. If the vectors $n_1$, $n_2$, $n_3$ are not linearly independent, then we can find a great circle $\gamma$ which intersects the interior of every region formed by $\gamma_1$, $\gamma_2$, $\gamma_3$. This implies that $\gamma$ intersects each curve $C_k$ transversally, and therefore intersects every color class. We may therefore conclude that $n_1$, $n_2$, $n_3$ are linearly independent, and consequently, the three great circles $\gamma_1$, $\gamma_2$, $\gamma_3$ divide $\es$ into eight triangular regions which come in antipodal pairs $\{ Q_1 , -Q_1 \}$, $\{Q_2,-Q_2\}$, $\{Q_3,-Q_3\}$, and $\{Q_4 ,-Q_4\}$. Notice that for any triple of regions $Q_{i_1}$, $Q_{i_2}$, $Q_{i_3}$ there exists a great circle which intersects the interior of each of the regions. Therefore, since $R_k$ is symmetric about the origin, we may assume (using Observation \ref{4c}, again) that each curve $C_k$ is the boundary of $Q_k$ or $-Q_k$ for $k = 1,2,3,4$. 

It remains to show that $n_i \cdot n_j = 0$ for $i\neq j$. Let $T$ denote the interior of the spherical triangle with vertices $n_1$, $n_2$, $n_3$. The important observation is that every great circle which is orthogonal to a point in $T$ will intersect the same triple of regions, $Q_{i_1}$, $Q_{i_2}$, $Q_{i_3}$, and therefore every point in $T$ must belong to same color class. In other words, $T\subset \ess k$ where $k\neq i_1, i_2, i_3$. Consequently, if a pair of the normal vectors satisfy $n_i \cdot n_j < 0$, then $T$ would contain a pair of orthogonal points in the same color class. This shows that the normal vectors $n_1$, $n_2$, $n_3$ form an orthonormal basis for $\RR^3$, and each color class $\ess k$ contains the interiors of an antipodal pair spherical triangles $Q_k$ and $-Q_{k}$ which have all right angles. In other words, the coloring is octahedral.

\section{Proof of Theorem \ref{weak}} \label{P2}

Clearly, in an octahedral 4-coloring of $\es$, every color class is locally octahedral, so it remains to show the other direction. This will be deduced from the following result which is of independent interest.

\begin{lemma} \label{3uniform}
  For any 3-coloring of $\:\es$ there exists points $u ,v, w$ in the same color class such that  $(u \cdot v) (u \cdot w) (v \cdot w) < 0$.
\end{lemma}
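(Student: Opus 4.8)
```latex
\subsection*{Proof proposal for Lemma \ref{3uniform}}

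The plan is to argue by contradiction: suppose there is a 3-coloring
$\es = \ess1 \cup \ess2 \cup \ess3$ in which no color class contains a
triple $u,v,w$ with $(u\cdot v)(u\cdot w)(v\cdot w) < 0$. In other words,
every color class is \emph{locally octahedral} in the sense of Theorem
\ref{weak}. I want to show that such a 3-coloring cannot exist, which
together with the (trivial) observation that orthogonal colorings need
four colors will drive the contradiction. The natural first move is to
relate local octahedrality to the structure used in Section \ref{P1}:
the condition $(u\cdot v)(u\cdot w)(v\cdot w)\geq 0$ for all
$u,v,w$ in a class is a strong sign-coherence constraint, and I expect it
to force each color class to lie, up to the antipodal symmetry, inside a
union of regions cut out by a small number of great circles---mimicking
the octahedral conclusion of Theorem \ref{domination} but now with only
three colors available.

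Concretely, I would first analyze a single locally octahedral set $S$.
Fixing one point $u$, the points $v$ with $u\cdot v>0$, those with
$u\cdot v<0$, and those orthogonal to $u$ partition $S$, and the product
condition says that any two non-orthogonal points $v,w$ of $S$ must have
$(v\cdot w)$ of the sign dictated by their positions relative to $u$.
The key structural step is to show that a locally octahedral set, after
possibly discarding a measure-zero boundary, is governed by an orthogonal
triple of directions exactly as in the octahedral case; this is where I
would try to reuse the machinery of dominating sets and the curves $C_k$
from Section \ref{P1}, or a direct convexity argument, to pin down the
geometry. The point is that three locally octahedral classes cannot tile
$\es$: an octahedron-type decomposition genuinely needs four classes
(each class being an antipodal pair of right-angled triangles), so with
only three classes some pair of same-colored orthogonal points, or some
forbidden sign pattern, must appear.

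The cleanest route may instead be to invoke the Kochen--Specker
obstruction directly: since local octahedrality of all classes would in
particular make the coloring orthogonal on any orthonormal triple, I
would try to exhibit a finite configuration of vectors on which the
product condition cannot be consistently satisfied by three colors,
analogous to DeCorte's 13-vector graph of chromatic number $4$ described
in the introduction. The hard part, and the main obstacle, is bridging
the purely local (three-point) hypothesis to a global contradiction:
local octahedrality is weaker than being orthogonal, so I cannot simply
quote that $\es$ has no orthogonal 3-coloring. I expect the crux to be
showing that the sign condition, applied across enough overlapping
triples, propagates into a genuine orthogonality obstruction on a finite
sub-configuration, after which a short combinatorial check (that no
3-coloring of that configuration avoids a monochromatic forbidden triple)
completes the proof. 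Once Lemma \ref{3uniform} is in hand, Theorem
\ref{weak} follows by combining it with Theorem \ref{domination}, since a
locally octahedral color class can be shown to admit a dominating set.
```
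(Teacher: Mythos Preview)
Your proposal is a plan rather than a proof, and both of the routes you sketch run into concrete obstacles. The Kochen--Specker idea does not get off the ground because local octahedrality does \emph{not} force orthogonality: if $u\perp v$ lie in the same class then $(u\cdot v)(u\cdot w)(v\cdot w)=0\geq 0$ for every $w$, so the sign condition is satisfied trivially and you cannot read off any orthogonality obstruction from a finite configuration. The other plan, importing the dominating-set machinery of Section~\ref{P1}, also stalls: that argument leans repeatedly on Observation~\ref{4c} (no great circle meets all four color classes), which is a consequence of \emph{orthogonality} and has no analogue for a merely locally-octahedral 3-coloring. Finally, your last line---``a locally octahedral color class can be shown to admit a dominating set''---is false as stated (a single point is locally octahedral); the deduction of Theorem~\ref{weak} from Lemma~\ref{3uniform} instead goes by showing that if one class had empty interior, the \emph{closures} of the other three would give a locally-octahedral 3-coloring, contradicting the Lemma.

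The paper's proof is much more direct and geometric. Passing to closures $A_i$ (which preserve the inequality $(u\cdot v)(u\cdot w)(v\cdot w)\geq 0$), one class, say $A_3$, has an interior point, which one may place at the north pole. Either $A_3$ also meets a small band $N_\varepsilon$ around the equator---in which case a bad triple in $A_3$ is found on a single great circle through that point and the pole---or $N_\varepsilon\subset A_1\cup A_2$. The latter case is handled by a one-dimensional lemma (Observation~\ref{circle2}): if $\mathbb S^1$ is covered by two closed locally-octahedral sets, each must be a pair of antipodal quarter-arcs meeting at the vertices of an inscribed square. Applying this to the equator and to two slight tilts of it pins down enough points in $A_1\cap A_2$ to exhibit a bad triple explicitly. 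The missing idea in your proposal is precisely this reduction to the circle; it replaces the hoped-for global structural theory with a short, self-contained computation.
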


Before getting to the proof, let us show how Lemma \ref{3uniform} implies Theorem \ref{weak}. 

\begin{proof}[Proof of Theorem \ref{weak}]
Consider an orthogonal 4-coloring which is locally octahedral. We want to show that it is an octahedral coloring.  If every color class contains an open set in $\es$, then the coloring is octahedral by Theorem \ref{domination}. So without loss of generality, we assume that $\ess 4$ has empty interior. This implies that $\es = A_1 \cup A_2 \cup A_3$ where $A_i$ is the closure of $\ess i$.

By assumption, for any triple of points $u, v, w \in \ess i$ we have $(u \cdot v) (u \cdot w) (v \cdot w) \geq 0$. The important observation is that this inequality also holds for triples of points $u,v,w \in A_i$. If we set $T_1 = A_1$, $T_2 = A_2\setminus A_1$, and $T_3 = A_3\setminus (A_1\cup A_2)$, we obtain a 3-coloring, $\es = T_1\cup T_2 \cup T_3$, which violates Lemma \ref{3uniform}. \end{proof}

For our proof of Lemma \ref{3uniform} we need the following.

\begin{obs} \label{circle2}
  Suppose the unit circle, $\mathbb{S}^1$, is covered by two closed sets $B_1$ and $B_2$ such that for any $u,v,w \in B_i$ we have $(u \cdot v) (u \cdot w) (v \cdot w) \geq 0$. Then each $B_i$ consists of a pair of closed antipodal arcs of length $\pi/2$, and $B_1\cap B_2$ consists of the vertices of a square inscribed in $\mathbb{S}^1$.
\end{obs}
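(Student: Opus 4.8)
The plan is to isolate a purely local fact about the circle and then let the covering hypothesis do the rest. Concretely, I would first prove: \emph{any non-empty closed set $B\subseteq\mathbb{S}^1$ for which every triple satisfies $(u\cdot v)(u\cdot w)(v\cdot w)\ge 0$ has arc-length $m(B)\le\pi$, and if $m(B)=\pi$ then $B$ is exactly a pair of antipodal closed arcs of length $\pi/2$.} Applying this to $B_1$ and $B_2$ together with $B_1\cup B_2=\mathbb{S}^1$ will force both to have length $\pi$ and pin down the square. Throughout I use that for unit vectors $u\cdot v=\cos d(u,v)$, where $d(u,v)\in[0,\pi]$ is the angular distance, so $u\cdot v$ is positive, zero, or negative according as $d(u,v)<,=,>\pi/2$. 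The hypothesis thus forbids any triple in which the product of the three dot products is negative, i.e.\ a triple with no orthogonal pair but with exactly one or exactly three pairs at distance $>\pi/2$.

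\emph{Key local step.} If $B$ has no pair at distance $>\pi/2$, then its angular diameter is $\le\pi/2$, so it lies in a single closed arc of length $\le\pi/2$ and $m(B)\le\pi/2$. Otherwise I would fix a pair $u_0,v_0\in B$ with $d(u_0,v_0)>\pi/2$ and consider
\[
N_u=\{w\in B:\ w\cdot u_0>0,\ w\cdot v_0<0\},\qquad N_v=\{w\in B:\ w\cdot v_0>0,\ w\cdot u_0<0\}.
\]
Applying the condition to the triple $\{u_0,v_0,w\}$ shows that every $w\in B$ not orthogonal to $u_0$ or to $v_0$ must be far from exactly one of $u_0,v_0$, hence lies in $N_u\cup N_v$; at most four points of $B$ (those orthogonal to $u_0$ or $v_0$) are excluded. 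Applying the condition to a triple $\{w_1,w_2,v_0\}$ with $w_1,w_2\in N_u$ forces $w_1\cdot w_2\ge 0$, so $N_u$ has diameter $\le\pi/2$ and lies in an arc $J_u$ of length $\le\pi/2$; symmetrically $N_v\subseteq J_v$ with $|J_v|\le\pi/2$. Thus $B$ is contained in $J_u\cup J_v$ plus at most four points, giving $m(B)\le\pi$. I expect this decomposition of $B$ around a far pair into two pieces of small diameter to be the step carrying the real weight; everything afterwards is bookkeeping.

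\emph{From the covering to the structure.} Since $B_1\cup B_2=\mathbb{S}^1$ we get $m(B_1)+m(B_2)\ge 2\pi$, while each is $\le\pi$, so $m(B_1)=m(B_2)=\pi$ and $m(B_1\cap B_2)=0$. Equality $m(B_i)=\pi$ forces $|J_u|=|J_v|=\pi/2$ and makes $B_i$ fill $J_u\cup J_v$ up to measure zero; as $B_i$ is closed (a relatively open subset of the arcs of measure zero is empty) this yields $J_u\cup J_v\subseteq B_i$. Applying the condition once more to triples $\{u_0,w_1,w_2\}$ with $w_1\in N_u,\ w_2\in N_v$ shows every point of $J_u$ is at distance $\ge\pi/2$ from every point of $J_v$; a short computation shows that the set of points at distance $\ge\pi/2$ from all of an arc of length $\pi/2$ is exactly its antipodal arc, so $J_v=-J_u$ and $B_i\supseteq J_u\cup(-J_u)$. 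A final check rules out any point of $B_i$ outside these two arcs: such a point, together with suitably chosen nearby points of $J_u$ and of $-J_u$ that are close to it but mutually far, would form a forbidden triple. Hence each $B_i$ is precisely a pair of antipodal closed arcs of length $\pi/2$.

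\emph{Conclusion.} The four arcs making up $B_1$ and $B_2$ have total length $2\pi$ and essentially disjoint interiors, so they tile $\mathbb{S}^1$; because the two arcs of a single $B_i$ are antipodal their centres are $\pi$ apart and they cannot be adjacent, so in cyclic order the arcs alternate between $B_1$ and $B_2$. They therefore meet in four equally spaced points, the vertices of an inscribed square, and these are exactly the points of $B_1\cap B_2$. The only places a gap could hide are the ``no extra points'' check and the elementary diameter-to-arc and antipode computations, all of which are routine once the key local decomposition is established.
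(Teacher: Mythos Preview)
Your proposal is correct, and the sketches you flag as ``routine'' genuinely are; in particular the ``no extra points'' check and the antipodality computation both go through exactly as you indicate.  However, your route is quite different from the paper's.

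The paper never isolates a single-set lemma.  Instead it uses at the outset that $B_1\cap B_2\neq\emptyset$ (closed cover of a connected space), fixes $p_0\in B_1\cap B_2$, and then works directly with the parametrized arcs $(p_{\pi/2},p_\pi)$ and $(p_\pi,p_{3\pi/2})$.  A single well-chosen triple $\{p_0,p_{\theta_1},p_{\theta_2}\}$ with $\theta_2-\theta_1>\pi/2$ forces $p_{\theta_1}$ and $p_{\theta_2}$ into \emph{different} $B_i$'s (since $p_0$ lies in both), and a short propagation argument then pushes each open quarter-arc entirely into one $B_i\setminus B_j$.  Closedness gives $p_\pi\in B_1\cap B_2$ and a repetition handles the remaining two quarter-arcs.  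So the paper exploits the \emph{interaction} between $B_1$ and $B_2$ from the start, whereas you first prove a standalone bound $m(B)\le\pi$ for any closed set with the triple property and only bring in the covering to force equality.  Your approach buys an independent structural lemma (the measure bound and its equality case), which may be of separate interest; the paper's approach is shorter and avoids the equality-case analysis, at the cost of being tied to the two-set covering situation.
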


\begin{proof}
Since $B_1$ and $B_2$ are closed and $\mathbb{S}^1 = B_1 \cup B_2$, it follows that $B_1\cap B_2\neq \emptyset$. Fix a point $p_0 \in B_1\cap B_2$, and for $\theta \geq 0$, let $p_\theta \in \mathbb{S}^1$ denote the point of $\mathbb{S}^1$ whose arc length from $p_0$ equals $\theta$ measured in counter-clockwise direction. Let $X_1$ be the open circular arc $(p_{\pi/2}, p_\pi)$, and let $X_2$ be the open circular arc $(p_{\pi}, p_{3\pi/2})$. For any $\pi/2 < \theta < 3\pi/2$ we have $p_0 \cdot p_{\theta} < 0$, so if we choose points $p_{\theta_1}\in X_1$ and $p_{\theta_2} \in X_2$ such that $\theta_2 - \theta_1 > \pi/2$, then we have $(p_0 \cdot p_{\theta_1}) (p_0 \cdot p_{\theta_2}) (p_{\theta_1} \cdot p_{\theta_2}) < 0$. Since $p_0\in B_1\cap B_2$, the hypothesis implies that $p_{\theta_1}$ and $p_{\theta_2}$ must belong to different $B_i$ so we may assume $p_{\theta_1}\in B_1$ and $p_{\theta_2}\in B_2$. We claim that this implies that $X_1\subset B_1\setminus B_2$ and $X_2\subset B_2\setminus B_1$. To see this, notice that for any $\pi/2 < \theta < \theta_1$, we have $p_\theta \cdot p_{\theta_2} < 0$, and therefore $p_\theta \in B_1 \setminus B_2$. Now, for any $\pi < \theta < 3\pi/2$, there exists an $\varepsilon > 0$ such that $\pi/2 < \theta - \pi/2 - \epsilon < \theta_1$, which shows that $X_2 \subset B_2 \setminus B_1$. The same reasoning shows that $X_1 \subset B_1 \setminus B_2$, and since $B_1$ and $B_2$ are closed it follows that $p_\pi \in B_1 \cap B_2$. Therefore we can apply the previous argument to the open intervals $Y_1 = (p_{3\pi/2},p_{2\pi})$ and $Y_2 = (p_0,p_{\pi/2})$, which implies that one of them is contained in $B_1\setminus B_2$ while the other is contained in $B_2\setminus B_1$. It is however easily seen that we must have $Y_1 \subset B_1\setminus B_2$ and $Y_2 \subset B_2\setminus B_1$, which shows that $B_1 = [\pi/2, \pi] \cup [3\pi/2,2\pi]$ and  $B_2 = [0, \pi/2] \cup [\pi,3\pi/2]$. \end{proof}

\begin{proof}[Proof of Lemma \ref{3uniform}]
Let $\es = \ess 1 \cup \ess 2 \cup \ess 3$ be a 3-coloring and let $A_i$ be the closure of $\ess i$. Note that if there exists points $u,v,w \in A_i$ such that $(u \cdot v) (u \cdot w) (v\cdot w) < 0$, then we can also find points $u',v',w' \in \ess i$ which also satisfy this inequality.

At least one of the $A_i$ must have non-empty interior, and without loss of generality we may assume that the point $(0,0,1)$ is an interior point of $A_3$. Therefore there exists a (small) $\varepsilon>0$ such that every point in an $\varepsilon$-neighborhood of the equatorial great circle $\gamma_0 = \{(x,y,0) \: : \: x^2 + y^2 = 1\}$, denoted by $N_\varepsilon$, is orthogonal to a point in $A_3$. 

Suppose there exists a point $u \in A_3 \cap N_\varepsilon$. By our assumption, there is a point $p$ in the interior of $A_3$ such that $u \cdot p = 0$. If we consider the great circle $\gamma$ passing through $u$ and $p$, it is easily seen that  we can find points $v, w \in \gamma \cap A_3$ which are close to $p$  such that $u \cdot v > 0$, $u\cdot w < 0$, and $v \cdot w > 0$.

Now suppose that $N_\varepsilon \subset A_1\cup A_2$. If $\gamma_0$ does not contain the three points we are looking for, then Observation \ref{circle2} can be applied, and after a suitable rotation we may assume that the points $(\rpm 1,0,0)$ and $(0,\rpm 1, 0)$ are contained in $A_1 \cap A_2$. Moreover, by slightly rotating $\gamma_0$ about the $x$-axis while staying within $N_\varepsilon$, we may refer to Observation \ref{circle2} again to see that the point $u = (0, \cos \alpha ,  \sin \alpha)$ is contained in $A_1\cap A_2$, where $\alpha>0$ is sufficiently small. Similarly, by rotating $\gamma_0$ about the $y$-axis we find that the points $v = (\cos \alpha, 0, \sin \alpha)$ and $w = (\cos \alpha, 0, -\sin \alpha)$ are also contained in $A_1\cap A_2$. Thus we have found points $u,v,w \in A_1\cap A_2$ such that $(u \cdot v) (u \cdot w) (v \cdot w) < 0$.
\end{proof}

\section{Final remarks} \label{C}

\subsection{Noel's conjecture} We want to point out that the condition of Theorem \ref{weak} was originally suggested to us by Noel \cite{noel} under the name {\em weakly octahedral}. In fact, Noel \cite{noel2} conjectured that any weakly octahedral coloring of $\es$ is octahedral, meaning that he does not require the coloring to be orthogonal. The full conjecture of Noel has been proven by the second author \cite{SHL}.

\subsection{Non-octahedral colorings} As we mentioned in the introduction,  we believe there exists orthogonal 4-colorings of $\es$ which are not octahedral, and our results give certain restrictions on such colorings. In contrast to the results of this paper (and as a possible candidate for a non-octahedral 4-coloring of $\es$) we would like to propose the following.

\begin{sprm*}
  What is the smallest integer $k$ for which there exists an orthogonal $k$-coloring of $\es$ where each color class is dense in $\es$ ?
\end{sprm*}

An example which shows that $k\leq 9$ was communicated to us by Jineon Baek \cite{jineon}. His argument is based on a non-Archimedean absolute value $\nu \colon \mathbb{R}\to \mathbb{R}$. (Take for instance, an extension of the 2-adic valutaion on $\mathbb{Q}$.) Here is a brief sketch of the construction, where the details are left to the reader.

For $i=1,2,3$ and $j=1,2$, define subsets
\[\def\arraystretch{1.4}
\begin{array}[h]{cccc}
X_i & \coloneqq & \{(x_1,x_2,x_3) \in \es \: : &  \nu(x_i) > \nu(x_k) \; , \; k\neq i \}, \\ 
Y_j & \coloneqq & \{(x_1,x_2,x_3) \in \es \: : &  \nu(x_j) \geq \nu(x_k) \; , \; k\neq j \}.
\end{array}
\]
Notice that the union $X_1\cup X_2\cup X_3\cup Y_1\cup Y_2$ covers $\es$. Moreover, each subset is dense in $\es$ and none of the $X_i$ contain orthogonal pairs of points. On the other hand, each $Y_j$ may contain orthogonal pairs, but for any $x\in X_j$ and $y \in Y_j$, we have $x\cdot y \neq 0$. Let $A_1$, $A_2$, $A_3$, $A_4$ be an orthogonal 4-coloring of $\es$, and for $j = 1,2$ and $k =1,2,3,4$, let \[C_{jk} = (Y_j \cap A_k) \cup (X_j \setminus A_k).\] The sets $C_{jk}$ together with $X_3$ result in a {\em covering} of $\es$ by 9 sets where each set is dense in $\es$, and no set contains a pair of orthogonal points. Finally, a simple topological argument allows us to reduce this covering to an orthogonal {\em coloring} where each color class is dense in $\es$.

\section{Acknowledgements}
We are grateful to the anonymous referee who gave several insightful remarks. We also thank Jineon Baek, Evan DeCorte, and Jon Noel for their enlightening discussions and for letting us include their examples in this paper.


\begin{thebibliography}{20}





\bibitem{jineon} J.~Baek. \newblock Personal Communication.

\bibitem{dgk} L.~Danzer and B.~Gr{\"u}nbaum and V.~Klee. \newblock Helly's theorem and its relatives. \newblock In {\em Convexity}, Proceedings of Symposia in Pure Mathematics, Vol VII, AMS, Providence, RI (1963).

\bibitem{edc} E.~DeCorte. \newblock Personal communincation.

\bibitem{falconer} K.~J.~Falconer. \newblock The realization of Distances in Measurable subsets Covering $R^n$. \newblock {\em J. Comb. Theory Ser. A} {\bf 31}, 184--189 (1981).

\bibitem{godsil} C.~D.~Godsil and J.~Zaks. \newblock Colouring the sphere. \newblock  arXiv:1201.0486v1 [math.CO]. 

\bibitem{hales} A.~Hales and E.~G.~Straus \newblock Projective colorings. \newblock {\em Pacific J. Math.} {\bf 99}, 31--43 (1982).



\bibitem{kochen} S.~Kochen and E.~P.~Specker. \newblock The problem of hidden variables in quantum mechanics. \newblock {\em J. Math. Mech.} {\bf 17}, 59--87 (1967).


\bibitem{SHL} S.~Lee. \newblock Combinatorial Geometry on the Sphere. \newblock Master's thesis, KAIST (2015).

\bibitem{lov} L.~Lov{\'a}sz. \newblock Self-dual polytopes and the chromatic number of distance graphs on the sphere. \newblock {\em Acta Sci. Math.} {\bf 45}, 317-323 (1983).

\bibitem{noel} J.~Noel. \newblock Personal communication.

\bibitem{noel2} J.~Noel.  \newblock Partitioning the Projective Plane. \newblock  http://mathoverflow.net/questions/140413/partitioning-the-projective-plane  (2013).

\bibitem{peres} A.~Peres. \newblock Two simple proofs of the Kochen--Specker theorem. \newblock {\em J. Phys. A: Math. Gen.} {\bf 24}, L175--L178 (1991).

\bibitem{raig} A.~M.~Raigorodskii. \newblock On the chromatic number of spheres in $\mathbb{R}^n$. \newblock {\em Combinatorica} {\bf 32}, 111--123 (2012).

\bibitem{raiS1} A.~M.~Raigorodskii. \newblock Coloring Distance Graphs and Graphs of Diameters. In {\em Thirty Essays on Geometric Graph Theory.} J.~Pach ed., Springer, 429--460 (2013).

\bibitem{raiS2} A.~M.~Raigorodskii. \newblock Cliques and Cycles in distance graphs and graphs of diameters. In {\em Discrete geometry and Algebraic Combinatorics.} AMS Contemporary Mathematics {\bf 625}, 93--109 (2014).

\bibitem{simmons} G.~J.~Simmons. \newblock The chromatic number of the sphere. \newblock {\em J. Austral. Math. Soc.} {\bf 21} (Series A), 473--480 (1976).
 
\bibitem{soifer1} S.~Shelah and A.~Soifer. \newblock Axiom of choice and chromatic number of the plane. \newblock {\em J. Comb. Theory Ser. A} {\bf 103},  387--391 (2003).

\bibitem{soifer2} S.~Shelah and A.~Soifer. \newblock Axiom of choice an chromatic number: examples on the plane. \newblock {\em J. Comb. Theory Ser. A} {\bf 105},  359 -- 364 (2004).

\bibitem{soifs} A.~Soifer. \newblock The Mathematical Coloring Book: Mathematics of coloring and the Colorful Life of its Creators. Springer, Berlin (2009).


\bibitem{woodall} D.~R.~Woodall. \newblock Distances realized by sets covering the plane. {\em J. Comb. Theory Ser. A} {\bf 14}, 187--200 (1973).

\end{thebibliography}
\end{document}